\newtheorem{thm}{Theorem}[section]
\newtheorem{lem}[thm]{Lemma}
\theoremstyle{definition}
\theoremstyle{remark}
\numberwithin{equation}{section}
\theoremstyle{remark}
\newcommand{\mbb}{\mathbb}
\newcommand{\ra}{\rightarrow}
\newcommand{\pa}{\partial}
\newcommand{\sm}{\setminus}
\newcommand{\no}{\noindent}
\newcommand{\cal}{\mathcal}
\begin{document}
\title{Non-negative divisors and the Grauert metric}
\keywords{Grauert metric, holomorphic sectional curvature, non-negative divisors}
\thanks{SG is supported by CSIR-SPM Ph.D. fellowship}
\subjclass{Primary: 32T05, 32Q15}
\author{Sahil Gehlawat and Kaushal Verma}

\address{SG: Department of Mathematics, Indian Institute of Science, Bangalore 560 012, India}
\email{sahilg@iisc.ac.in}

\address{KV: Department of Mathematics, Indian Institute of Science, Bangalore 560 012, India}
\email{kverma@iisc.ac.in}

\begin{abstract}
Grauert showed that it is possible to construct complete K\"{a}hler metrics on the complement of complex analytic sets in a domain of holomorphy. In this note, we study the holomorphic sectional curvatures of such metrics on the complement of a principal divisor in $\mbb{C}^n$, $n \ge 1$. In addition, we also study how this metric and its holomorphic sectional curvature behaves when the corresponding principal divisors vary continuously.
\end{abstract}

\maketitle 

%%%%%%%%%%%%%%%%%%%%%
\section{Introduction}

\noindent According to Grauert \cite{Gr}, it is possible to construct a complete K\"{a}hler metric on the complement of a complex analytic set in a domain of holomorphy. The purpose of this note is twofold. First, we study the holomorphic sectional curvatures of the metric that is obtained by this process on the complement of a principal divisor in $\mbb C^n$, $n \ge 2$. The case $n =1$ have been studied in detail in the last section. Secondly, we show that if $U \subset \mbb C^n$, $n \ge 1$ is a domain of holomorphy (or more generally, admits a complete K\"{a}hler metric) with $H^1(U, \mbb Z) = 0$, then Grauert's construction possesses an intrinsic continuity property when the principal divisors vary continuously in an appropriate sense. 

\medskip

To set things in context, recall that the set of divisors on a complex manifold $M$ is 
\[
\mathcal D(M) = \left\{ \sum_{i \in I} n_{i} V_{i} : n_i \in \mbb Z  \right\}
\]
where $\{V_i\}_{i \in I}$ is a locally finite system of irreducible analytic hypersurfaces in $M$ and the support of a divisor $D \in \mathcal D(M)$ is the union of the hypersurfaces $V_i$ -- this will be denoted by $\vert D \vert$.  The set of non-negative divisors $\mathcal D^+(M) \subset \mathcal D(M)$ consists of those divisors wherein $n_i \ge 0$ for all $i \in I$. The natural map 
\[
{\sf Div} : \mathcal O(M)  \ra \mathcal D^+(M)
\]
defined by 
\[
{\sf Div}(f) =\sum_i \left( {\rm ord}_{V_i}f \right) V_i
\]
assigns a holomorphic function to the divisor defined by its zero set; here, $\{V_i : i \in I\}$ are the irreducible components of $\mathcal Z(f)$, the zero set of a holomorphic function $f \in \mathcal O(M)$, and ${\rm ord}_{V_i}f$ is the order of $V_i \subset \mathcal Z(f)$. The set of {\it principal divisors} arises as the image ${\sf Div}(\mathcal O(M)) \subset \mathcal D^+(M)$ and this will be denoted by $\mathcal D^+_P(M)$.

\medskip

\no Let $D \in \mathcal{D}_{P}^{+}(\mbb{C}^n)$ be a principal divisor. Choose $f \in \cal{O}(\mbb{C}^n)$ such that ${\sf Div}(f) = D$. Recall that the Grauert metric on $\mbb{C}^{*}$ (see \cite{Gr}) is a conformal metric defined by
\begin{equation}
g = \Big( 1 + \left\vert w \right\vert^2 u^{2}\big(\vert w\vert^2\big)\Big) \left\vert dw\right\vert^2
\end{equation}
where $u : (0, \infty) \rightarrow \mbb{R}$ is the function $u(t) = (t-1)/(t \log{t})$. It was shown in \cite{GV} that the curvature of $g$ approaches $-4$ or $0$ according as $\vert z \vert \ra 0$ or $+\infty$ respectively, and is non-positive everywhere on $\mbb C^{\ast}$. Consider the complete K\"ahler metric on $\mbb{C}^n \setminus \vert D \vert$ defined by
\begin{equation}
\phi(z,V) = f^{*}(g)(z,V) + \vert V \vert^2 = \Big( 1 + \left\vert f(z)\right\vert^2 u^{2}\big(\vert f(z)\vert^2\big)\Big) \left\vert df(z)(V)\right\vert^2 + \vert V\vert^2
\end{equation}
where $z \in \mbb{C}^n \sm \vert D \vert$ and $V \in \mbb{C}^n$.

\medskip

\no For $p \in \mbb{C}^n \sm \vert D \vert$ and $V \in \mbb{C}^n$, denote the holomorphic sectional curvature of $\phi$ at $(p, V)$ by $K(p, V)$. Set $K^+(p)$ to be the supremum of  $K(p, V)$ as $V$ varies in $\mbb C^n$. For a non-singular vector field $X$ near
$p \in \mbb{C}^n \sm \vert D \vert$, consider the restriction of $\phi$ to the leaf of the foliation defined by $X$ passing through $p$. Thus, we get a conformal metric on the germ of a Riemann surface containing $p$. Let $\cal{K}^{X}(p)$ denote the Gaussian curvature of this metric at $p$.

\begin{thm}
Let $(D,f,\phi)$ be as above on $\mbb{C}^n$, $n \ge 2$.
\begin{itemize}
\item[(i)] If $p \in \mbb{C}^n \setminus \vert D \vert$ and $V \in {\rm Ker}(df(p))$, then $K(p,V) \le 0$. In particular, if ${\rm Rank}(df(p)) = 0$, then $K^{+}(p) \le 0$.

\item[(ii)] If $p \in \mbb{C}^n \sm \vert D \vert$ and $f_{z_i}(p)  = \frac{\pa f}{\pa z_i}(p) \neq 0$ for some $1 \le i \le n$, then there exists a non-singular holomorphic vector field $X$ near $p$ such that $X(p) \notin {\rm Ker}(df(p))$ and $\cal{K}^{X}(p) \le 0$.

\item[(iii)] If $p \in \vert D \vert$ and $f_{z_i}(p) = \frac{\pa f}{\pa z_i}(p) \neq 0$, then there exists a non-singular holomorphic vector field $X$ near $p$ such that
\[
\lim_{z \to p} \cal{K}^{X}(z) = -4 < 0
\]
\end{itemize}
\end{thm}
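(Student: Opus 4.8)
\textit{Step 1 (the vector field).} The idea is to choose $X$ so that $f$ itself is a holomorphic coordinate along the leaves of its foliation; the restriction of $\phi$ to a leaf will then be the density of the Grauert metric of $\mbb C^{*}$ plus a bounded smooth term, and the claim will follow from the limiting behaviour of the Grauert curvature near $0$ recalled above (see \cite{GV}), provided that bounded term can be shown not to affect the limit. Concretely: after permuting coordinates we may assume $i=1$, and we shrink to a neighbourhood $U$ of $p$ on which $f_{z_1}$ is zero-free. Put $X = f_{z_1}^{-1}\,\pa/\pa z_1$ on $U$; this is a non-vanishing holomorphic vector field with $df(X)\equiv 1$, so in particular $X(p)\notin{\rm Ker}\,df(p)$. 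Since $X$ is a nowhere-zero multiple of $\pa/\pa z_1$, its leaves are the slices $U\cap(\mbb C\times\{z'\})$, $z'=(z_2,\dots,z_n)$, and along each such slice the restriction of $f$ has non-vanishing derivative $f_{z_1}$; hence $s:=f$ is a holomorphic coordinate on each leaf and $X=\pa/\pa s$ there.

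\textit{Step 2 (restriction to a leaf).} Fix $z\in U\sm\vert D\vert$ and let $L$ be the leaf through $z$, parametrised by $s=f$. Because $df(X)\equiv 1$, in the coordinate $s$ we have
\[
\phi|_{L} = \nu(s)\,\vert ds\vert^2, \qquad \nu(s) = A(\vert s\vert^2) + h(s), \qquad A(t) = 1 + t\,u^2(t),
\]
where $A(\vert w\vert^2)\,\vert dw\vert^2$ is exactly the Grauert metric $g$ on $\mbb C^{*}$, and $h(s)=\Vert X\Vert^2$ is a smooth, strictly positive function of $s$ (depending on $L$) which, together with $\pa_s h$ and $\pa_s\pa_{\bar s}h$, is bounded on a neighbourhood of $p$ uniformly over the leaves meeting that neighbourhood. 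By definition $\cal K^{X}(z)$ is the Gaussian curvature of $\nu\,\vert ds\vert^2$ at $s=f(z)$, and $s=f(z)\to 0$ as $z\to p$ since $p\in\vert D\vert$.

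\textit{Step 3 (passing to the limit).} Write $\mu(s)=A(\vert s\vert^2)$, so that $\nu=\mu\,(1+h/\mu)$, and
\[
\cal K^{X}(z) = -\frac{2}{\nu}\,\pa_s\pa_{\bar s}\log\nu = \frac{\mu}{\nu}\Bigl(-\frac{2}{\mu}\,\pa_s\pa_{\bar s}\log\mu\Bigr) - \frac{2}{\nu}\,\pa_s\pa_{\bar s}\log\bigl(1+h/\mu\bigr).
\]
The bracketed factor is the Gaussian curvature of $g$ at $\vert w\vert=\vert s\vert$, which tends to $-4$ as $\vert s\vert\to 0$ by \cite{GV}; and $u(t)=(t-1)/(t\log t)$ gives $A(t)\sim 1/\bigl(t(\log t)^2\bigr)$ as $t\to 0^{+}$, so $\mu\to\infty$ and $\mu/\nu\to 1$. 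Hence the first term tends to $-4$, and it remains to see the second term tends to $0$. Differentiating the asymptotics of $A$ near $0$ controls $A'$ and $A''$, which forces the derivatives of $1/\mu$ to grow at most like a fixed power of $\log\vert s\vert$, so that $\pa_s\pa_{\bar s}\log\bigl(1+h/\mu\bigr)(s)=O\bigl((\log\vert s\vert^2)^2\bigr)$ uniformly over leaves; since $\nu\sim\mu\sim 1/\bigl(\vert s\vert^2(\log\vert s\vert^2)^2\bigr)$, the second term is then $O\bigl(\vert s\vert^2(\log\vert s\vert^2)^4\bigr)\to 0$. Combining, $\lim_{z\to p}\cal K^{X}(z)=-4$.

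\textit{The main difficulty} is this last estimate. The pieces building up $\pa_s\pa_{\bar s}\log(1+h/\mu)$ are individually unbounded -- for instance $\pa_s\pa_{\bar s}(h/\mu)$ blows up like $(\log\vert s\vert^2)^2$ -- and become negligible only after division by the large density $\nu$, so one has to keep track of the powers of $\log\vert s\vert$ rather than merely invoke the boundedness of $h$ and its derivatives. Everything else reduces to a routine, if slightly tedious, computation with the explicit function $u$.
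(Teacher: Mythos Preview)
Your proposal addresses only part (iii); parts (i) and (ii) are not attempted at all, so as a proof of the full statement it is incomplete.

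For (iii) your argument is correct and is essentially the paper's own approach, just packaged differently. Your vector field $X=f_{z_1}^{-1}\,\partial/\partial z_1$ coincides with the paper's: the paper changes coordinates via $W(z)=(f(z),z_2,\dots,z_n)$, takes the constant field $\tilde X=\partial/\partial w_1$, and pulls back by $W$; since $(W^{-1})_*\,\partial/\partial w_1=\tilde g_{w_1}\,\partial/\partial z_1=f_{z_1}^{-1}\,\partial/\partial z_1$, this is exactly your $X$. Your leaf coordinate $s=f$ is the paper's $w_1$, and your density split $\nu=\mu+h$ is the paper's $A+B$ with $h=\lvert\tilde g_{w_1}\rvert^2=B$. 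The only difference is the final bookkeeping: the paper expands the numerator $(A+B)\bar\partial\partial(A+B)-\partial(A+B)\bar\partial(A+B)$ and appeals to its Lemma~5.2 (case $k=1$), which gives $\partial A/A^3,\ \bar\partial A/A^3,\ \bar\partial\partial A/A^3\to 0$ and $A\to\infty$, together with the observation $B\bar\partial\partial B-\partial B\bar\partial B\equiv 0$; you instead write $\log\nu=\log\mu+\log(1+h/\mu)$ and control the second piece by the same asymptotics of $A,A',A''$, worked out inline. Both routes are equivalent; the paper's lemma packages the estimate more cleanly (and is reused for general order $k$ in the $n=1$ section), while your logarithmic split makes the structure ``$(\mu/\nu)\cdot K_g$ plus error'' a bit more visible.
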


\medskip

\no To summarize, for points away from the support of $D$, (i) shows that the curvature $K(p, X)$ is non-positive along certain directions. On the other hand, though the conclusions of (ii) and (iii) are less precise, they show that the curvature of the restriction of $\phi$ to the leaves of a certain foliation is again non-positive.

\medskip

\no It is natural to identify a non-negative divisor with the current of integration over it, namely:
\[
D(\alpha) = \langle \alpha, D\rangle = \Sigma_{j} m_{j} \int_{V_{j}} \alpha
\]
for each smooth compactly supported $(n-1, n-1)-$form $\alpha$ on $X$ where $D = \Sigma_{j} m_j V_j$. Thus $\cal{D}^{+}(M)$ can be considered as a subset of the dual space of $D^{(n-1,n-1)}(M)$, the space of compactly supported $(n-1,n-1)-$smooth forms on $M$. Therefore $\cal{D}^{+}(M)$ inherits two natural topologies namely, the relative weak* topology and the relative strong topology.

\medskip
\no \textbf{Weak* Topology}: A net $\{D_{\alpha}\}_{\alpha \in A}$ in $\cal{D}^{+}(M)$ converges to $D_{0} \in \cal{D}^{+}(M)$ iff for every compactly supported smooth $(n-1,n-1)-$form $\xi$ on $M$,
$$\lim_{\alpha \in A} \int_{D_{\alpha}} \xi = \int_{D_{0}} \xi$$ 

\medskip
\no \textbf{Strong Topology}: A net $\{D_{\alpha}\}_{\alpha \in A}$ in $\cal{D}^{+}(M)$ converges to $D_{0}\in \cal{D}^{+}(M)$ iff it converges in the weak* sense and if, moreover, the convergence is uniform on bounded sets in the space $D^{(n-1,n-1)}(M)$.

\medskip
\no There is another topology on $\cal{D}^{+}(M)$ introduced by Stoll \cite{St} which is defined in function-theoretic terms:

\medskip

\no \textbf{Stoll's Topology:} A net $\{D_{\alpha}\}_{\alpha \in A}$ in $\cal{D}^{+}(M)$ converges to $D_{0} \in \cal{D}^{+}(M)$ iff there is an open cover $\cal{V} = \{V_{j}\}_{j \in \mbb{N}}$ of $M$ such that for each $\alpha \in A$, there is an $f_{j \alpha} \in \cal{O}(V_{j})$ such that ${\sf Div}(f_{j \alpha}) = D_{\alpha}\vert_{V_{j}}$ and $f_{j \alpha}$ converges uniformly on compacts in $V_{j}$ to $f_{j0} \in \cal{O}(V_{j})$, where ${\sf Div}(f_{j 0}) = D_{0} \vert_{V_j}$.

\medskip
\no Lupacciolu-Stout \cite{LS} have shown that on $\cal{D}^{+}(M)$, the above three topologies are equivalent. Using this equivalence they showed: \textit{For a complex-analytic manifold $M$ of dimension $n \ge 1$ satisfying $H^{1}(M, \mbb{Z}) = 0$ and $H^{1}(M, \cal{O}) = 0$, there exist a continuous map $\psi : \cal{D}^{+}_{P}(M) \rightarrow \cal{O}(M)$ such that ${\sf Div}(\psi(D)) = D$ for all $D \in \cal{D}^{+}_{P}(M)$}. In addition, if $M$ is a domain in a Stein manifold $N$ satisfying $H^{1}(M, \mbb{Z}) = 0$, then there exist a continuous map $\psi: \cal{D}^{+}_{P}(M) \rightarrow \cal{O}(M)$ such that ${\sf Div}(\psi(D)) = D$ for all $D \in \cal{D}^{+}_{P}(M)$.

\medskip
\no Let $U \subset \mbb{C}^n$ be a domain of holomorphy or more generally a domain that has a complete K\"ahler metric $\Phi_{U}$. Let $D \in \cal{D}^{+}_{P}(U)$ be a principal divisor and $f \in \cal{O}(U)$ be a holomorphic function such that $D = {\sf Div}(f)$. Define a pseudometric on $U \sm \vert D \vert$ by $\tilde{\phi}_{D} := f^{*}(g)$, where $g$ is the Grauert metric on $\mbb{C}^{*}$. Therefore, $\phi_{D} = \tilde{\phi}_{D} + \Phi_{U}$ is a complete K\"ahler metric on $U \sm \vert D \vert$.

\medskip

The following statements clarify the dependence of the Grauert metric and its curvature as a function of the divisor $D$.

\begin{thm}
Let $U \subset \mbb{C}^n$, $n \ge 1$ be a domain as above, which also satisfies $H^{1}(U, \mbb{Z}) = 0$. If $\{D_{j}\}_{j \in \mbb{N}} \subset \cal{D}^{+}_{P}(U)$ is a sequence of non-negative principal divisors such that $D_{j}$ converges to $D_0 \in \cal{D}^{+}_{P}(U)$ with respect to any of the equivalent topologies above, then
\begin{itemize}
\item[(i)] there exists a sequence of complete K\"ahler metrics $\{\phi_{j}\}$ and $\phi_{0}$ on $U \sm \vert D_{j} \vert$ and $U \sm \vert D_{0} \vert$ repectively such that $\phi_{j}$ converges to $\phi_{0}$ uniformly on compacts of $U \sm \vert D_{0} \vert$.
\item[(ii)] for a fixed complete K\"ahler metric on $U \sm \vert D_{0} \vert$ of the form $\xi = f^{*}(g) + \Phi_{U}$ where $D_{0} = {\sf Div}(f)$, we can choose a sequence of complete K\"ahler metrics $\{\xi_{j}\}$ on $U \sm \vert D_{j} \vert$ such that $\xi_{j}$ converges to $\xi$ uniformly on compacts of $U \sm \vert D_{0} \vert$.
\end{itemize} 
\end{thm}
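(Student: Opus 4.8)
\medskip

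\noindent\textbf{Proof proposal.} The plan is to reduce both assertions to the Lupacciolu--Stout selection theorem quoted above, together with an elementary uniform estimate for the Grauert conformal factor $t \mapsto 1 + t\,u^{2}(t)$. First, since $U$ is a domain in the Stein manifold $\mathbb{C}^n$ and $H^{1}(U,\mathbb{Z}) = 0$, Lupacciolu--Stout provide a continuous map $\psi : \mathcal{D}^{+}_{P}(U) \to \mathcal{O}(U)$, the target carrying the topology of uniform convergence on compact subsets, with $\mathrm{Div}(\psi(D)) = D$ for every $D \in \mathcal{D}^{+}_{P}(U)$. Because $D_{j} \to D_{0}$ in one, hence each, of the three equivalent topologies, continuity of $\psi$ yields $f_{j} := \psi(D_{j}) \to f_{0} := \psi(D_{0})$ uniformly on compact subsets of $U$. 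Put $\phi_{j} := f_{j}^{*}(g) + \Phi_{U}$ on $U \setminus |D_{j}|$ and $\phi_{0} := f_{0}^{*}(g) + \Phi_{U}$ on $U \setminus |D_{0}|$; by the remarks immediately preceding the theorem each of these is a complete K\"ahler metric on the indicated domain.

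To see that $\phi_{j} \to \phi_{0}$ uniformly on compacta of $U \setminus |D_{0}|$, fix a compact set $K \subset U \setminus |D_{0}|$. Since $f_{0}$ is zero-free on $K$, there is $\delta > 0$ with $|f_{0}| \ge \delta$ on $K$, whence $|f_{j}| \ge \delta/2$ on $K$ for all large $j$; in particular $K \subset U \setminus |D_{j}|$ for such $j$, so $\phi_{j}$ is defined on $K$, and $|f_{j}|^{2}$ takes values in a fixed compact subinterval $I \subset (0,\infty)$ over $K$. The map $t \mapsto 1 + t\,u^{2}(t)$ extends smoothly to all of $(0,\infty)$ and is therefore uniformly continuous on $I$; combined with $|f_{j}|^{2} \to |f_{0}|^{2}$ uniformly on $K$, this gives that the conformal factors $1 + |f_{j}|^{2}u^{2}(|f_{j}|^{2})$ converge uniformly on $K$ to $1 + |f_{0}|^{2}u^{2}(|f_{0}|^{2})$. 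By the Weierstrass convergence theorem the partial derivatives of $f_{j}$ converge to those of $f_{0}$ uniformly on $K$, so $|df_{j}(z)(V)|^{2} \to |df_{0}(z)(V)|^{2}$ uniformly for $z \in K$ and $|V| \le 1$. Multiplying these two facts and adding the fixed metric $\Phi_{U}$, the coefficient functions of $\phi_{j}$ converge uniformly on $K$ to those of $\phi_{0}$; as $K$ was arbitrary, this proves (i).

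For (ii) we are handed a specific $f \in \mathcal{O}(U)$ with $\mathrm{Div}(f) = D_{0}$ and the metric $\xi = f^{*}(g) + \Phi_{U}$, and $f$ need not coincide with $f_{0} = \psi(D_{0})$. Since $f$ and $f_{0}$ define the same divisor, $f/f_{0} \in \mathcal{O}(U)$ is zero-free; because $H^{1}(U,\mathbb{Z}) = 0$, the exponential sheaf sequence $0 \to \mathbb{Z} \to \mathcal{O} \to \mathcal{O}^{\ast} \to 0$ forces $f/f_{0}$ to admit a holomorphic logarithm, i.e. $f = f_{0}\,e^{h}$ for some $h \in \mathcal{O}(U)$. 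Set $g_{j} := \psi(D_{j})\,e^{h}$. Multiplication by the zero-free factor $e^{h}$ leaves divisors unchanged, so $\mathrm{Div}(g_{j}) = D_{j}$, and $g_{j} \to f_{0}\,e^{h} = f$ uniformly on compacta of $U$ since $\psi(D_{j}) \to f_{0}$ does while $e^{h}$ is fixed. Then $\xi_{j} := g_{j}^{*}(g) + \Phi_{U}$ is a complete K\"ahler metric on $U \setminus |D_{j}|$, and the estimate of the previous paragraph, applied to the pair $(g_{j}, f)$ in place of $(f_{j}, f_{0})$, gives $\xi_{j} \to \xi$ uniformly on compacta of $U \setminus |D_{0}|$, proving (ii).

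The only non-formal ingredients are the Lupacciolu--Stout continuity of the selection $\psi$, which we are permitted to quote, and the observation in the last paragraph that the indeterminacy in the choice of defining function is removed by a \emph{global} holomorphic logarithm, available precisely because $H^{1}(U,\mathbb{Z}) = 0$. Everything else is the elementary estimate above; its one delicate point is checking that every compact subset of $U \setminus |D_{0}|$ is contained in $U \setminus |D_{j}|$ for all large $j$ — so that $\phi_{j}$ and $\phi_{0}$ can be compared on a common domain at all — which is where the uniform convergence $f_{j} \to f_{0}$ and the zero-freeness of $f_{0}$ on compacta are used.
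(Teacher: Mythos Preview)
Your proof is correct and follows the same strategy as the paper's: invoke the Lupacciolu--Stout selection $\psi$ for (i), and for (ii) twist $\psi$ by the nowhere-vanishing quotient so that it lands on the prescribed $f$. One small simplification over your version: the paper simply takes $h := f/f_{0} \in \mathcal{O}^{*}(U)$ and sets $g_{j} = h\,\psi(D_{j})$, so the detour through a holomorphic logarithm---and hence your second appeal to $H^{1}(U,\mathbb{Z}) = 0$---is unnecessary; that hypothesis is used only to obtain $\psi$ itself.
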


\medskip
\no Fix $p \in U \sm \vert D_{j} \vert$ and a non-singular holomorphic vector field $X$ near $p$. For a complete K\"ahler metric $\phi_{j}$ on $U \sm \vert D_{j} \vert$, denote the Gaussian curvature of the metric $\phi_{j}$ restricted to the leaves of foliation induced by $X$ by $\cal{K}^{X}_{\phi_{j}}(q)$ for points $q \in U \sm \vert D_{j} \vert$ near $p$. For $V \in \mbb{C}^n$, let $K_{\phi_{j}}(p, V)$ denote the holomorphic sectional curvature of the metric $\phi_{j}$ at $(p,V)$.

\begin{thm}
Let $(U, \phi_{j}, \phi_{0})$ be as in the above theorem and fix $p \in U \sm \vert D_{0} \vert$. Then
\begin{itemize}
\item[(i)] for a non-singular holomorphic vector field $X$ near $p$, there exists a neighourhood $U_p$ of $p$ in $U \sm \vert D_{0} \vert$ such that $\cal{K}^{X}_{\phi_{j}}(q) \rightarrow  \cal{K}^{X}_{\phi_0}(q)$ for all $q \in U_p$
\item[(ii)] $K_{\phi_0}(p, V) \le \liminf_{j \to \infty}{K_{\phi_{j}}(p, V)}$ for all $V \in \mbb{C}^n$.
\end{itemize}
\end{thm}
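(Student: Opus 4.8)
The plan is to reduce both parts to the $C^\infty$ convergence on compacts of the holomorphic functions defining the divisors — available from the proof of the preceding theorem through the continuity of the Lupacciolu--Stout selection $\psi:\cal D^+_P(U)\ra\cal O(U)$ — and then to combine this with the Gauss equation for complex curves.

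For part (i), write $\phi_j=f_j^*(g)+\Phi_U$ and $\phi_0=f_0^*(g)+\Phi_U$ with ${\sf Div}(f_j)=D_j$, ${\sf Div}(f_0)=D_0$ and $f_j\ra f_0$ uniformly on compact subsets of $U$; by Cauchy's estimates the convergence is then $C^\infty$ on compacts of $U$. Fix $p\in U\sm\vert D_0\vert$ and a non-singular holomorphic vector field $X$ near $p$, and choose holomorphic coordinates $(\z,w_2,\dots,w_n)$ in a neighbourhood of $p$ in which $X=\pa/\pa\z$; then the leaves of the $X$-foliation are the slices $\{w_2=\cdots=w_n=\text{const}\}$, and the restriction of $\phi_j$ to such a slice is the conformal metric $\phi_j(\cdot,X)\,\vert d\z\vert^2$, whose Gaussian curvature $\cal K^X_{\phi_j}$ equals $-\big(2\,\phi_j(\cdot,X)\big)^{-1}\pa_\z\pa_{\ov\z}\log\phi_j(\cdot,X)$ (and likewise for $\phi_0$). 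Since $f_0(p)\ne0$, shrink to a relatively compact $U_p$ with $\ov{U_p}\subset U\sm\vert D_0\vert$ and $\vert f_0\vert\ge c>0$ on $\ov{U_p}$; for all large $j$ one then has $\vert f_j\vert\ge c/2$ on $\ov{U_p}$, so $\ov{U_p}\subset U\sm\vert D_j\vert$, while $\vert f_j\vert^2$ stays in a fixed compact subinterval of $(0,\infty)$ on which $t\mapsto t\,u^2(t)$ is real-analytic (recall $u(t)=(t-1)/(t\log t)$ extends analytically across $t=1$). Since $\phi_j(\cdot,X)=\big(1+\vert f_j\vert^2u^2(\vert f_j\vert^2)\big)\,\vert df_j(X)\vert^2+\Phi_U(\cdot,X)$ is bounded below on $\ov{U_p}$ uniformly in $j$ by the fixed positive function $\Phi_U(\cdot,X)$, and $f_j\ra f_0$ in $C^\infty(\ov{U_p})$, we get $\log\phi_j(\cdot,X)\ra\log\phi_0(\cdot,X)$ in $C^\infty(\ov{U_p})$; applying the fixed second-order expression above — rational in $\phi_j(\cdot,X)$ and its first two derivatives, with $\phi_j(\cdot,X)$ in the denominator — gives $\cal K^X_{\phi_j}\ra\cal K^X_{\phi_0}$ uniformly on compact subsets of $U_p$, which is (i).

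For part (ii), fix $p\in U\sm\vert D_0\vert$ (so $p\in U\sm\vert D_j\vert$ for $j$ large, as above) and $V\in\mbb C^n$, and let $X$ be any non-singular holomorphic vector field near $p$ with $X(p)=V$. The leaf through $p$ is a germ of a complex curve tangent to $V$, so the Gauss equation for this complex submanifold of the K\"ahler manifold $(U\sm\vert D_j\vert,\phi_j)$ gives $\cal K^X_{\phi_j}(p)=K_{\phi_j}(p,V)-\Vert\mathrm{II}(V,V)\Vert^2/\Vert V\Vert_{\phi_j}^4\le K_{\phi_j}(p,V)$, where $\mathrm{II}$ is the second fundamental form of the leaf at $p$. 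Taking $\liminf_j$ and using (i) yields $\liminf_j K_{\phi_j}(p,V)\ge\cal K^X_{\phi_0}(p)$; since $X$ is arbitrary it remains to show that $\sup_X\cal K^X_{\phi_0}(p)=K_{\phi_0}(p,V)$, the supremum being over non-singular holomorphic $X$ with $X(p)=V$. Here ``$\le$'' is the Gauss equation once more, and for ``$\ge$'' one chooses K\"ahler normal coordinates $(w_1,\dots,w_n)$ for $\phi_0$ centred at $p$ with $\pa/\pa w_1\vert_p=V$: the slice $\{w_2=\cdots=w_n=0\}$ has vanishing second fundamental form at $p$ and is the leaf through $p$ of the foliation of $X=\pa/\pa w_1$, so $\cal K^X_{\phi_0}(p)=K_{\phi_0}(p,V)$. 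Combining gives $K_{\phi_0}(p,V)\le\liminf_{j\to\infty}K_{\phi_j}(p,V)$.

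The step I expect to be the main obstacle is the book-keeping in (i): one must arrange that $\ov{U_p}$ is disjoint from $\vert D_0\vert$ and, for all large $j$, from all the $\vert D_j\vert$, so that the conformal factors $\phi_j(\cdot,X)$ and the arguments of $u$ remain in the region where every quantity is smooth and bounded below — this is exactly where $H^1(U,\mbb Z)=0$ and the continuity of $\psi$ are used. The other delicate point is the variational identity $K_{\phi_0}(p,V)=\sup_X\cal K^X_{\phi_0}(p)$, which requires a complex curve through $p$, tangent to $V$, that is totally geodesic to second order at $p$; K\"ahler normal coordinates supply it, but without such a curve one would obtain only an inequality between a supremum of leaf-curvatures and the holomorphic sectional curvature.
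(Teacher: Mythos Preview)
Your argument is correct and follows essentially the same route as the paper: part (i) is the paper's proof verbatim up to cosmetic choices (you straighten $X$ to $\pa/\pa\z$ and use the $\log$-form of the curvature formula, whereas the paper parametrizes integral curves and uses the equivalent $h^{-3}(h\,\bar\pa\pa h-\pa h\,\bar\pa h)$ form), and part (ii) is the same three-step chain $\cal K^X_{\phi_0}(p)=\lim_j\cal K^X_{\phi_j}(p)\le\liminf_j K_{\phi_j}(p,V)$. The only difference is that the paper obtains the extremal vector field realizing $K_{\phi_0}(p,V)=\cal K^X_{\phi_0}(p)$ by citing \cite{Wu}, while you reprove that fact via the Gauss equation and a K\"ahler-normal-coordinate slice; this is exactly Wu's argument, so the approaches coincide.
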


%%%%%%%%%%%%%%%%%%%%%%%%%%%%%%

\section{Proof of Theorem 1.1}
Fix a non-negative principal divisor $D \in \cal{D}^{+}_{P}(\mbb{C}^n)$ and $f \in \cal{O}(\mbb{C}^n)$ such that ${\sf Div}(f) = D$. The corresponding complete K\"ahler metric on $\mbb{C}^n \sm \vert D \vert$ is given by:
$$\phi(z,V) = \Big(1 + \left\vert f(z)\right\vert^2 u^{2}\big(\vert f(z)\vert^2\big)\Big) \left\vert df(z)(V)\right\vert^2 + \vert V \vert^2$$
where $z \in \mbb{C}^n \sm \vert D \vert$ and $V \in \mbb{C}^n$.

\medskip

\no For $(i)$, let $p \in \mbb{C}^n \sm \vert D \vert$ and let $X$ be a non-singular holomorphic vector field near $p$ such that $X_p = X(p) \in {\rm Ker}\big(df(p)\big)$. Note that since $n \ge 2$, we have dim$\Big({\rm Ker}\big(df(p)\big)\Big) \ge 2-1 = 1$. For a disc $B(0, \epsilon) \subset \mbb C$, let $Z : B(0, \epsilon) \rightarrow \mbb{C}^n \sm \vert D \vert$ be a holomorphic parametrization of a germ of a leaf of $X$ through $p$, i.e.,
\[
Z'(T) = X\big(Z(T)\big) 
\]
for all $T \in B(0, \epsilon)$ and $Z(0) = p$. We will write $Z_T$ to denote $Z(T)$, and just $u$ instead of $u\big(\vert f(Z_T)\vert^2\big)$. Therefore
\[
Z^{*}(\phi)(T) = \left\{ \big(1 + \left\vert f(Z_T)\right\vert^2 u^{2}\big) \left\vert df(Z_T)\big(X(Z_T)\big)\right\vert^2 + \left\vert X(Z_T)\right\vert^2 \right\} \vert dT\vert^2
\]
and this can be written as $Z^{*}(\phi)(T) = h(T) \left\vert dT\right\vert^2$, where
\[
h(T) = \Big(\big(1 + \left\vert f(Z_T)\right\vert^2 u^{2}\big) \left\vert df(Z_T)\big(X(Z_T)\big)\right\vert^2\Big) + \left\vert X(Z_T)\right\vert^2.
\]
Note that $\partial h(T) = \frac{\partial h (T)}{\partial T} = A_{1}(T) + A_{2}(T) + A_{3}(T)$, where
\begin{align*} 
A_{1}(T) &= \big(u^{2} + 2 \left\vert f(Z_T)\right\vert^2 u u'\big) \left\vert df(Z_T)\big(X(Z_T)\big)\right\vert^2 \left\langle df(Z_T)\big(X(Z_T)\big), f(Z_T)\right\rangle , \\
A_{2}(T) &= \big(1 + \left\vert f(Z_T)\right\vert^2 u^2\big) \left\langle \partial \Big(df(Z_T)\big(X(Z_T)\big)\Big), df(Z_T)\big(X(Z_T)\big)\right\rangle, \\
A_{3}(T) &= \left\langle dX(Z_T)\big(X(Z_T)\big), X(Z_T)\right\rangle.
\end{align*}

\medskip

\no Using the fact that $\overline{h(T)} = h(T)$, it can be checked that $\bar{\partial} h(T) = \overline{\partial h(T)}$. Also $A_1(0) = A_2(0) = \bar{\partial}A_{1}(0) = 0$, and
\begin{align*}
\bar{\partial}A_{2}(T) &= P(T) + \Big( \big(1 + \left\vert f(Z_T)\right\vert^2 u^2 \big) \left\vert \partial \Big(df(Z_T)\big(X(Z_T)\big)\Big)\right\vert^2\Big),\\
\bar{\partial}A_{3}(T) &= \left\vert dX(Z_T)\big(X(Z_T)\big)\right\vert^2.
\end{align*}
where $P(T) = \bar{\partial} \Big(1 + \left\vert f(Z_T)\right\vert^2 u^2\Big) \left\langle \partial \Big(df(Z_T)\big(X(Z_T)\big)\Big), df(Z_T)\big(X(Z_T)\big)\right\rangle$ satisfies $P(0) = 0$.

\medskip

\no Thus, we get that $h(0) = \left\vert X_p\right\vert^2$, $\partial h(0) = \left\langle dX(p)\big(X_p\big), X_p\right\rangle$, $\bar{\partial} h(0) = \left\langle X_p, dX(p)\big(X_p\big)\right\rangle$ and
\[
\bar{\partial}\partial h(0) = \left\vert dX(p)\big(X_p\big)\right\vert^2 + \Big(1 + \vert f(p)\vert^2 u^{2}(\vert f(p)\vert^2)\Big) \left\vert \partial \Big(df(Z_T)\big(X(Z_T)\big)\Big)\big|_{T = 0}\right\vert^2   \ge \left\vert dX(p)\big(X_p\big)\right\vert^2
\]
and this implies that
\[
h(0)\bar{\partial}\partial h(0) - \partial h(0) \bar{\partial} h(0) \ge \left\vert X_p\right\vert^2 \left\vert dX(p)(X_p)\right\vert^2 - \left\vert \left\langle dX(p)(X_p), X_p\right\rangle \right\vert^2 \ge 0
\]
by the Cauchy-Schwarz inequality. Therefore, the Gaussian curvature of the leaves of the foliation induced by $X$ at $p$ is 
\[
\cal{K}^{X}(p) = {-2}(h(0))^{-3} \left\{ h(0)\bar{\partial}\partial h(0) - \partial h(0) \bar{\partial} h(0) \right\} \le 0.
\]

\medskip

\no By \cite{Wu}, there exist a non-singular vector field $Y$ near $p$ such that $Y(p) = V$ and $K(p, V) = \cal{K}^{Y}(p)$. Since the above argument is true for all non-singular holomorphic vector fields near $p$ which equal $V$ at point $p$, it follows that 
\[
K(p, V) \le 0.
\]
\no Now, if ${\rm Rank}\big(df(p)\big) = 0$ at some $p \in \mbb{C}^n \sm \vert D \vert$, then ${\rm Ker}\big(df(p)\big) = \mbb{C}^n$ and therefore $K(p, V) \le 0$ for all $V \in \mbb{C}^n$. Thus 
\[
K^{+}(p) = \sup_{V \in \mbb{C}^n}{K(p, V)} \le 0.
\]

\medskip

\no \textbf{Note:} The above proof is true more generally also. Indeed, let $M$ be a complex manifold of dimension $n \ge 2$ equipped with a metric $\phi$ which has non-positive holomorphic sectional curvature. Let $f : M \rightarrow \mbb{C}$ be a non-constant holomorphic map and assume that $\mbb{C}^{*}$ is equipped with a metric $\psi$. Then the metric $\Phi := f^{*}(\psi) + \phi$ defined on $M \sm \{f = 0\}$ satisfies $K(p, V) \le 0$ for all $p \in M \sm \{f = 0\}$ and $V \in {\rm Ker}\big(df(p)\big)$.

\medskip
 Now let $p \in \mbb{C}^n$ such that ${\rm Rank}\big(df(p)\big) = 1$ and suppose that $f_{z_1}(p) = \frac{\partial f}{\partial z_1}(p) \neq 0$. Then $W(z) = \left(f(z), z_2, \ldots , z_n\right)$ defines a new coordinate system around $p$ with 
\[
W^{-1}(w) = \left(\tilde{g}(w), w_2, \ldots , w_n\right)
\]
for a suitable holomorphic function $\tilde{g}$. Since $W \circ W^{-1} \equiv \text{Id}$, we see that $f_{z_1}(z) \tilde{g}_{w_1}(W(z)) \equiv 1$. For $V = (1,0, \ldots, 0) \in \mbb{C}^n$, 
\[
\left((W^{-1})^{*} \phi\right) (w, V) = (1 + \left\vert w_1\right\vert^2 u^{2}(\vert w_1\vert^2)) \left((W^{-1})^{*} \left\vert df(z)(dz)\right\vert^2 (w,V)\right) + (W^{-1})^{*}(\vert dz\vert^2)(w,V)
\]
and observe that 
\[
\big(W^{-1}\big)^{*}\left(\left\vert df(z)(dz)\right\vert^2\right) (w,V) = \left\vert f_{z_1}\big(W^{-1}(w)\big)\right\vert^2 \left\vert \tilde{g}_{w_1}(w)\right\vert^2  \equiv 1
\]
and $\big(W^{-1}\big)^{*}(\vert dz\vert^2)(w,V) = \left\vert \tilde{g}_{w_1}(w)\right\vert^2 = \left\vert f_{z_1}(W^{-1}(w))\right\vert^{-2}$ . Therefore, we get
\[
\tilde{\phi}(w, V) := \left((W^{-1})^{*} \phi\right) (w, V) = \left(1 + \left\vert w_1\right\vert^2 u^{2}(\vert w_1\vert^2) + \left\vert \tilde{g}_{w_1}(w)\right\vert^2\right).
\]

\no Consider the constant vector field $\tilde{X}(w) \equiv V$ near the point $W(p)$. Take $X := W^{*}(\tilde{X})$ near point $p$ and observe that $X_p = W^{*}(V) \notin {\rm Ker}\big(df(p)\big)$. Let $w = (w_1, w_2, \ldots , w_n) = W(z) \in \mbb{C}^n$ and $Z_T = Z(T) = w + (T, 0, \ldots, 0)$ be a local parametrization of leaf of $\tilde{X}$ passing through $w$. We get
\[
Z^{*}(\tilde{\phi})(T) = \left(\left(1 + \left\vert w_1 + T\right\vert^2 u^{2}\right) + \left\vert \tilde{g}_{w_1}(Z_T)\right\vert^2\right) \left\vert dT\right\vert^2 = \big(A(T) + B(T)\big) \left\vert dT\right\vert^2
\]
where $A(T) = \left(1 + \left\vert w_1 + T\right\vert^2 u^{2}(\vert w_1 + T\vert^2)\right)$ and $B(T) = \left\vert \tilde{g}_{w_1}(Z_T)\right\vert^2$.

\medskip
\no For $(ii)$, suppose that $p \in \mbb{C}^n \sm \vert D\vert$. Fix $w = W(p)$ and observe that $A(T) \vert dT\vert^2$ is the Grauert metric restricted in a neighourhood of $0 \neq f(p) \in \mbb{C}$, therefore the Gaussian curvature is non-positive by \cite{GV}. It is straightforward to check that $\partial B(T) =  \tilde{g}_{w_1 w_1}(Z_T) \overline{\tilde{g}_{w_1}(Z_T)}$, $\bar{\partial} B(T) = \tilde{g}_{w_1}(Z_T) \overline{\tilde{g}_{w_1 w_1}(Z_T)}$ and $\bar{\partial}\partial B(T) = \left\vert \tilde{g}_{w_1 w_1}(Z_T)\right\vert^2$ which tells us that the metric $B(T) \vert dT\vert^2$ has curvature identically $0$ near $T= 0$. Therefore, \cite{Gr2} tells us that the Gaussian curvature of the metric $Z^{*}(\tilde{\phi})$ is non-positive near $0 \in \mbb{C}$. Thus, we get $\cal{K}^{X}(p) \le 0$.

\medskip

\no For $(iii)$, suppose that $p \in \vert D\vert$ and let $w = W(z) \in \mbb{C}^n$ such that $w_1 \neq 0$. As we calculated in $(ii)$, it is clear that, 
\begin{equation}
B(T)\bar{\partial}\partial B(T) - \bar{\partial} B(T) \partial B(T) \equiv 0
\end{equation}
and $\tilde{g}_{w_1}, \tilde{g}_{w_1 w_1}$ are bounded near $W(p)$. Since $A(0) = \left(1 + \left\vert w_1\right\vert^2 u^2 (\vert w_1\vert^2)\right)$, it follows that $\lim_{w_1 \to 0} A(0) = +\infty$, and
\begin{equation}
\lim_{w_1 \to 0} \frac{A(0)}{A(0) + B(0)} = 1.
\end{equation}
Now
\[
\cal{K}^{X}(z) = \cal{K}^{\tilde{X}}(W(z)) = -2 \left(\frac{(A + B)(0) \bar{\partial}\partial (A+B)(0) - \bar{\partial}(A+B)(0) \partial (A+B)(0)}{\left(A(0) + B(0)\right)^3}\right) = -2\frac{N(0)}{D(0)}
\]
where $D(0) = \left(A(0) + B(0)\right)^3$ and
\[
N(0) = \left(A \bar{\partial}\partial A - \partial A \bar{\partial} A\right)(0) + \left(B \bar{\partial}\partial B - \bar{\partial} B \partial B\right)(0) + \left(A \bar{\partial}\partial B + B \bar{\partial}\partial A - \bar{\partial} A \partial B - \partial A \bar{\partial} B\right)(0).
\]
Lemma 5.2 in the last section (using $k = 1$) shows that
\begin{equation}
\lim_{w_1 \to 0} \frac{\bar{\partial} \partial A(0)}{\big(A(0)\big)^3} = \lim_{w_1 \to 0} \frac{\partial A(0)}{\big(A(0)\big)^3} = \lim_{w_1 \to 0} \frac{\bar{\partial} A(0)}{\big(A(0)\big)^3} = 0.
\end{equation}

\no Also, we have
\begin{equation}
-2 \frac{\left(A \bar{\partial}\partial A - \partial A \bar{\partial} A\right)(0)}{\big(A(0)\big)^3} = K_{g}(w_1).
\end{equation}

\no Therefore,
\[
\lim_{z \to p} \cal{K}^{X}(z) = \lim_{w \to W(p)} \left(-2\frac{N(0)}{D(0)}\right) = \lim_{w \to W(p)} \left(-2\frac{N(0)}{\big(A(0)\big)^3}\right)\left(\frac{\big(A(0)\big)^3}{D(0)}\right)
\]
Since $w \to W(p)$ is equivalent to $w_1 \to 0$, an application of $(2.1), (2.2), (2.3) \ {\rm and} \ (2.4)$ gives
\[
\lim_{z \to p} \cal{K}^{X}(z) = \lim_{w_1 \to 0} K_{g}(w_1) + 0 = -4
\]
This completes the proof.

%%%%%%%%%%%%%%%%%%%%%%%
\section{Proof of Theorem 1.2}

Since $H^{1}(U, \mbb{Z}) = 0$, \cite{LS} shows that there exists a continuous map $\psi : \cal{D}^{+}_{P}(U) \rightarrow \cal{O}(U)$ such that ${\sf Div}(\psi(D)) = D$ for all $D \in \cal{D}^{+}_{P}(U)$. Let $\Phi$ denote the complete K\"ahler metric on $U$. Now for any $D \in \cal{D}^{+}_{P}(U)$ and $f \in \cal{O}(U)$ such that ${\sf Div}(f) = D$, we have the following complete K\"ahler metric on $U \sm \vert D \vert$
\[
\phi_{D}(z) = f^{*}(g)(z) + \Phi(z) = \left(1+ \left\vert f(z)\right\vert^2 u^{2}(\vert f(z)\vert^2)\right)f^{*}(\vert dw\vert^2) + \Phi(z)
\]

\no Observe that $f^{*}(\vert dw\vert^2)(z) = \Sigma_{i,k =1}^{n}\frac{\partial f}{\partial z_i}(z) \overline{\frac{\partial f}{\partial z_k}(z)} dz_{i} d\overline{z_{k}}$, and therefore
\[
\phi_{D}(z) =\left( \Sigma_{i,k =1}^{n} \left(1+ \left\vert f(z)\right\vert^2 u^{2}(\vert f(z)\vert^2)\right)\frac{\partial f}{\partial z_i}(z) \overline{\frac{\partial f}{\partial z_k}(z)} dz_{i} d\overline{z_{k}}\right) + \Phi(z).
\]

\no We are given a sequence of non-negative principal divisors $D_{j} \in \cal{D}^{+}_{P}(U)$ such that $D_{j} \to D_{0} \in \cal{D}^{+}_{P}(U)$.

\medskip

\no For $(i)$, define $f_{j} := \psi(D_j)$ and $f_{0} = \psi(D_{0})$. Therefore, we get a sequence of holomorphic functions $\{f_{j}\}_{j \in \mbb{N}} \subset \cal{O}(U)$ such that $f_{j} \to f_{0}$ uniformly on compacts of $U$ with ${\sf Div}(f_j) = D_j$, ${\sf Div}(f_{0}) = D_0$. This gives us corresponding complete K\"ahler metrics $\phi_{j} = f_{j}^{*}(g) + \Phi$ and $\phi_{0} = f_{0}^{*}(g) + \Phi$ on $U \sm \vert D_{j} \vert$ and $U \sm \vert D_{0} \vert$ respectively. Explicitly
\begin{align*}
\phi_{j}(z) &= \Sigma_{i,k =1}^{n} \left(1+ \left\vert f_{j}(z)\right\vert^2 u^{2}(\vert f_{j}(z)\vert^2)\right)\frac{\partial f_{j}}{\partial z_i}(z) \overline{\frac{\partial f_{j}}{\partial z_k}(z)} dz_{i} d\overline{z_{k}} + \Phi(z),\\
\phi_{0}(z) &= \Sigma_{i,k =1}^{n} \left(1+ \left\vert f_{0}(z)\right\vert^2 u^{2}(\vert f_{0}(z)\vert^2)\right)\frac{\partial f_{0}}{\partial z_i}(z) \overline{\frac{\partial f_{0}}{\partial z_k}(z)} dz_{i} d\overline{z_{k}} + \Phi(z).
\end{align*}

\no Since $f_{j} \to f_{0}$ uniformly on compacts of $U$, all derivatives of $f_{j}$ will also converge uniformly to the corresponding derivative of $f_{0}$ on compacts of $U$. Since $u : (0, \infty) \rightarrow \mbb{R}$ is real analytic, it follows that $u(\vert f_{j}\vert^2) \to u(\vert f_{0}\vert^2)$ uniformly on compacts of $U \sm \vert D_{0} \vert$. Therefore, $\phi_{j} \to \phi_{0}$ uniformly on compacts of $U \sm \vert D_{0} \vert$.

\medskip
\no For $(ii)$, suppose we are given a complete K\"ahler metric on $U \sm \vert D_{0} \vert$ of the form $\xi = \tilde{f_{0}}^{*}(g) + \Phi$ where ${\sf Div}(\tilde{f_{0}}) = D_{0}$. Let $f_{0} = \psi(D_{0})$  and observe that $h(z) := \tilde{f_{0}}(z)/f_{0}(z) \in \cal{O}^{*}(U)$ since ${\sf Div}(f_{0}) = {\sf Div}(\tilde{f_{0}}) = D_{0}$. Define 
\[
\tilde{\psi} : \cal{D}^{+}_{P}(U) \rightarrow \cal{O}(U)
\]
by $\tilde{\psi}(D)(z) := h(z)[\psi(D)(z)]$. Clearly $\tilde{\psi}$ is continuous and $\tilde{\psi}(D_{0}) = h \psi(D_{0}) = h f_{0} = \tilde{f_{0}}$. Also,
\[
{\sf Div}\left(\tilde{\psi}(D)\right) = {\sf Div}\big(h \psi(D)\big) = {\sf Div}\big(\psi(D)\big) = D
\]
for all $D \in \cal{D}^{+}_{P}(U)$. So we can define $\tilde{f_{j}} := \tilde{\psi}(D_{j})$. Here $\tilde{f_{j}} \to \tilde{f_{0}}$ uniformly on compacts of $U$ and ${\sf Div}(\tilde{f_{j}}) = D_{j}$. So the corresponding complete K\"ahler metrics $\xi_{j} := \tilde{f_{j}}^{*}(g) + \Phi$ converges uniformly to the metric $\xi = \tilde{f_{0}}^{*}(g) + \Phi$ on compacts of $U \sm \vert D_{0} \vert$.

%%%%%%%%%%%%%%%%%%%%%%%%%%%%%%%
\section{Proof of Theorem 1.3}

We are given complete K\"ahler metrics $\phi_{j} = f_{j}^{*}(g) + \Phi$ and $\phi_{0} = f_{0}^{*}(g) + \Phi$ on $U \sm \vert D_{j} \vert$ and $U \sm \vert D_{0} \vert$ respectively, where $f_{j}, f_{0} \in \cal{O}(U)$ are such that $f_{j} \to f_{0}$ uniformly on compacts of $U$.

\medskip
\no $(i)$ Let $p \in U \sm \vert D_{0} \vert$. Since $D_{j} \to D_{0}$, choose a neighourhood $U_{p}$ of $p$ relatively compact in $U \sm \vert D_{0} \vert$ such that $\overline{U}_{p} \subset U \sm \vert D_{j} \vert$ for large enough $j$. Let $X$ be a non-singular holomorphic vector field near $p$. For $q \in U_{p}$, consider a parametrization $Z : B(0, \epsilon) \rightarrow U_{p}$ such that $Z(0) = q$, $Z_T = Z(T)$ and $dZ(T)/dT = X(Z_T)$. Then  $Z^{*}(\phi_{j}) = h_{j}(T)\vert dT\vert^2$ and $Z^{*}(\phi_{0}) = h_{0}(T)\vert dT\vert^2$, where
\[
h_j(T) = \Sigma_{i,k =1}^{n} \left(1+ \left\vert f_{j}(Z_T)\right\vert^2 u^{2}(\vert f_{j}(Z_T)\vert^2)\right)\frac{\partial f_{j}}{\partial z_i}(Z_T) \overline{\frac{\partial f_{j}}{\partial z_k}(Z_T)} X_{i}(Z_T) \overline{X_{k}(Z_T)} + \tilde{h}(T)
\]
and
\[
h_0(T) = \Sigma_{i,k =1}^{n} \left(1+ \left\vert f_{0}(Z_T)\right\vert^2 u^{2}(\vert f_{0}(Z_T)\vert^2)\right)\frac{\partial f_{0}}{\partial z_i}(Z_T) \overline{\frac{\partial f_{0}}{\partial z_k}(Z_T)} X_{i}(Z_T) \overline{X_{k}(Z_T)} + \tilde{h}(T)
\]
with $\tilde h = Z^{\ast}\Phi$. Further, recall that
\[
\cal{K}^{X}_{\phi_j}(q) = {-2}(h_{j}(0))^{-3} \left\{ h_{j}(0) \partial \bar{\partial}h_{j}(0) - \partial h_{j}(0) \bar{\partial} h_{j}(0) \right\}.
\]
\no Since $f_{j} \to f_{0}$ uniformly on compacts of $U$, $\partial^{i}\bar{\partial}^{k}f_{j}(Z_T) \to \partial^{i}\bar{\partial}^{k} f_{0}(Z_T)$ for all $T$ in a neighourhood of $0 \in \mbb{C}$ and for all $i, k \ge 0$. Similarly, $\vert f_{j}(Z_T)\vert^2$, $u(\vert f_{j}(Z_T)\vert^2)$ and their higher derivatives also converge uniformly on a neighourhood of $0 \in \mbb{C}$. Thus
\[
\partial^{i}\bar{\partial}^{k}h_{j}(T) \to \partial^{i}\bar{\partial}^{k}h_{0}(T)
\]
for all $0 \le i,k \le 1$ and for all $T$ in a neighourhood of $0 \in \mbb{C}$. Therefore, $\cal{K}^{X}_{\phi_j}(q) \to \cal{K}^{X}_{\phi_0}(q)$ for all $q \in U_{p}$.

\medskip

\no $(ii)$ Let $p \in U \sm \vert D_{0} \vert$ and $V \in \mbb{C}^n$. Clearly $p \in U \sm \vert D_j \vert$ for large enough $j$. Let $X$ be a non-singular vector field near $p$ such that $X(p) = V$ and $K_{\phi_0}(p, V) = \cal{K}^{X}_{\phi_0}(p)$. By $(i)$, $\cal{K}^{X}_{\phi_j}(p) \to \cal{K}^{X}_{\phi_0}(p)$. Therefore, for $\epsilon > 0$, there exist $N \ge 1$ such that 
\[
\cal{K}^{X}_{\phi_0}(p) \le \cal{K}^{X}_{\phi_j}(p) + \epsilon \le K_{\phi_j}(p, V) + \epsilon
\]
for all $j \geq N$ and this gives $K_{\phi_0}(p, V) \le K_{\phi_j}(p, V) + \epsilon$. Hence, $K_{\phi_0}(p, V) \le \liminf_{j \to \infty}{K_{\phi_j}(p, V)} + \epsilon$ for all $\epsilon >0$ and consequently, $K_{\phi_0}(p, V) \le \liminf_{j \to +\infty}{K_{\phi_j}(p, V)}$.

%%%%%%%%%%%%%%%%%%%%%
\section{The case $n=1$}

\no For a non-constant holomorphic function $f: \mbb{C} \rightarrow \mbb{C}$, consider the complete K\"ahler metric on $\mbb{C} \sm \{f = 0\}$ given by $\phi(z) = f^{*}(g) + \vert dz \vert^2$. We have
\[
\phi(z) = h(z) \vert dz \vert^2 = \left(\vert f'(z) \vert^2 \left(1 + \left\vert f(z)\right\vert^2 u^{2}(\vert f(z) \vert^2)\right) + 1\right)\vert dz \vert^2
\]
Denote the Gaussian curvature of the metric $\phi$ by $K$. 

\begin{thm}
Let $(f, \phi, K)$ be as above and $p \in \mbb{C}$. 
\begin{itemize}
\item[(i)] If $p \in \mbb{C} \sm \{f = 0\}$, then $K(p) \le 0$. In addition, if $f'(p) = 0$ then $K(p) = 0$ if and only if $f''(p) = 0$.
\item[(ii)] If $p \in \{f = 0\}$, then 
\[
\lim_{z \to p}{K(z)} = -4
\]
\end{itemize}
\end{thm}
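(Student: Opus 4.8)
The plan is to reduce everything to the curvature formula for a conformal metric $\phi = h(z)\,\abs{dz}^2$, namely $K = -2h^{-3}\bigl(h\,\pa\bar\pa h - \pa h\,\bar\pa h\bigr)$, and to exploit the splitting $h = \lambda_1 + 1$, where $\lambda_1(z) = \abs{f'(z)}^2 G(\abs{f(z)}^2)$ is the density of the pseudometric $f^{*}(g)$ and $G(t) = 1 + t\,u^2(t) \ge 1$ on $(0,\infty)$. Since the summand $1$ is constant one has the exact identity
\[
h\,\pa\bar\pa h - \pa h\,\bar\pa h = \bigl(\lambda_1\,\pa\bar\pa\lambda_1 - \pa\lambda_1\,\bar\pa\lambda_1\bigr) + \pa\bar\pa\lambda_1 .
\]
Wherever $f' \ne 0$ the density $\lambda_1$ is positive, the first bracket equals $-\tfrac12\lambda_1^{3}K_1$ with $K_1$ the Gaussian curvature of $f^{*}(g)$, and the conformal behaviour of curvature under the holomorphic map $f$ gives $K_1(z) = K_g(f(z))$; by \cite{GV}, as recalled in the introduction, $K_g \le 0$ on $\mbb{C}^{*}$ and $K_g(w) \to -4$ as $\abs{w} \to 0$. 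Thus the whole problem comes down to controlling $\pa\bar\pa\lambda_1$ together with the sizes of $\lambda_1$ and $h$.

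For part (i) I would split on whether $f'(p)$ vanishes. If $f'(p) \ne 0$, then besides $\lambda_1\,\pa\bar\pa\lambda_1 - \pa\lambda_1\,\bar\pa\lambda_1 = -\tfrac12\lambda_1^{3}K_1 \ge 0$ one also has $\pa\bar\pa\lambda_1 = \abs{\pa\lambda_1}^2/\lambda_1 - \tfrac12\lambda_1^2 K_1 \ge 0$ (equivalently, $\log\lambda_1 = \log\abs{f'}^2 + (\log G(\abs{\cdot}^2))\circ f$ is subharmonic because $\pa\bar\pa\log G(\abs{w}^2) = -\tfrac12 G\,K_g \ge 0$); feeding these two non-negative quantities into the curvature formula yields $K(p) \le 0$. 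If instead $f'(p) = 0$ while $f(p) \ne 0$, then $\lambda_1(p) = 0$ and $\pa\lambda_1(p) = 0$ since every term of $\pa\lambda_1$ carries a factor $f'$ or $\overline{f'}$, whereas a one-line differentiation gives $\pa\bar\pa\lambda_1(p) = G(\abs{f(p)}^2)\,\abs{f''(p)}^2$ and $h(p) = 1$; substituting into the identity gives $K(p) = -2\,G(\abs{f(p)}^2)\,\abs{f''(p)}^2$, which is $\le 0$ and is zero precisely when $f''(p) = 0$, since $G > 0$.

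For part (ii) fix $p$ with $f(p) = 0$ and work on a punctured disc about $p$ on which $f$ and $f'$ are zero-free. Writing $f(z) = (z-p)^{k} g(z)$ with $g(p)\ne 0$, the elementary asymptotics of $u$ (so that $G(t) \sim \bigl(t(\log t)^2\bigr)^{-1}$ and $G'(t)/G(t) \sim -t^{-1}$ as $t \to 0^{+}$), combined with $\abs{f'/f}^2 \sim k^2\abs{z-p}^{-2}$ and $(\log\abs{f}^2)^2 \sim 4k^2(\log\abs{z-p})^2$, give the multiplicity-independent equivalent $\lambda_1(z) \sim \bigl(4\abs{z-p}^2(\log\abs{z-p})^2\bigr)^{-1} \to +\infty$; in particular $h \to +\infty$ and $\lambda_1^{3}/h^{3} \to 1$. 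The curvature formula becomes
\[
K(z) = \frac{\lambda_1(z)^3}{h(z)^3}\,K_1(z) \;-\; \frac{2\,\pa\bar\pa\lambda_1(z)}{h(z)^3},
\]
whose first term tends to $-4$ because $K_1(z) = K_g(f(z)) \to -4$. For the second term I would use $\pa\bar\pa\lambda_1 = \abs{\pa\lambda_1}^2/\lambda_1 - \tfrac12\lambda_1^2 K_1$: the piece $\lambda_1^2 K_1/h^3$ is $O(\lambda_1^{-1}) \to 0$, while $\abs{\pa\lambda_1}^2/(\lambda_1 h^3) = \bigl(\abs{\pa\log\lambda_1}/\lambda_1\bigr)^2(\lambda_1^3/h^3)$ and $\abs{\pa\log\lambda_1} = \bigl\lvert f''/f' + (G'/G)(\abs{f}^2)\,f'\overline f\bigr\rvert = O(\abs{z-p}^{-1})$, so $\abs{\pa\log\lambda_1}/\lambda_1 = O\bigl(\abs{z-p}(\log\abs{z-p})^2\bigr) \to 0$. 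Hence the second term vanishes in the limit and $\lim_{z\to p} K(z) = -4$.

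The main obstacle is the asymptotic bookkeeping in part (ii): pinning down the exact size of $\lambda_1$ near $p$ with the cancellation of the multiplicity $k$, differentiating the transcendental $G$ to bound $\pa\log\lambda_1$, and verifying that the terms discarded as lower order genuinely contribute nothing in the limit. These estimates are exactly what the auxiliary Lemma 5.2 is meant to provide, here applied after the substitution $t = \abs{f(z)}^2$ rather than $t = \abs{w}^2$. A point to be careful about is that $z$ may approach $p$ without $p$ being a simple zero of $f$, so the coordinate change $w = f(z)$ used in the higher-dimensional arguments is unavailable and every estimate must be carried out in the $z$-variable; one also uses that $G$ is real-analytic on $(0,\infty)$ (so the derivative asymptotics are legitimate) and that the zeros of $f'$ are isolated (so a genuine punctured neighbourhood of $p$ is available).
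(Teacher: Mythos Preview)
Your argument is correct, but it follows a genuinely different route from the paper in both parts.

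For (i) with $f'(p)\ne 0$, the paper passes to the local inverse $\hat g$ of $f$, so that $\hat g^{*}\phi$ splits as the Grauert metric $g$ plus the flat metric $\abs{\hat g'(w)}^2\abs{dw}^2$, and then invokes the Grauert--Reckziegel lemma \cite{Gr2} (sum of non-positively curved conformal metrics is non-positively curved). You instead keep the $z$-coordinate and use the algebraic identity $h\,\pa\bar\pa h-\abs{\pa h}^2=(\lambda_1\,\pa\bar\pa\lambda_1-\abs{\pa\lambda_1}^2)+\pa\bar\pa\lambda_1$ together with the subharmonicity of $\lambda_1$ coming from $K_g\le 0$; this is in effect a hands-on proof of the Grauert--Reckziegel inequality in this specific situation. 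Your version is more self-contained, while the paper's is shorter once \cite{Gr2} is granted. The $f'(p)=0$ case is handled identically in both.

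For (ii), the paper first puts $f$ into the local normal form $f=(W(z))^k$ via a biholomorphism $W$ with $W(p)=0$, so that $(W^{-1})^{*}\phi$ splits as the flat piece $\abs{g_1'(w)}^2\abs{dw}^2$ plus the model metric $h_k(w)\abs{dw}^2$; the entire asymptotic burden is then carried by Lemma~5.2, which gives $h_k\to+\infty$, the vanishing of $\pa h_k/h_k^3$, $\bar\pa h_k/h_k^3$, $\pa\bar\pa h_k/h_k^3$, and the identity $K_k(w)=K_g(w^k)\to -4$. You avoid the normal form and work directly in the $z$-variable, extracting the same information from the raw asymptotics $G(t)\sim (t(\log t)^2)^{-1}$ and $G'(t)/G(t)\sim -t^{-1}$; the cancellation of the multiplicity $k$ that the paper obtains for free from $K_k(w)=K_g(w^k)$ appears in your computation as the cancellation in $\lambda_1\sim(4\abs{z-p}^2(\log\abs{z-p})^2)^{-1}$. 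The paper's route is cleaner and, once Lemma~5.2 is available, feeds directly into the higher-dimensional Theorem~1.1(iii); your route is more computational but stands on its own and does not require the auxiliary lemma in the precise form stated.
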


\noindent The proof relies on the following observations.

\begin{lem} 
For an integer $k \ge 1$, consider the metric 
\[
\phi_{k} (z) = k^2 \left\vert z \right\vert^{2(k-1)} \left(1 + \left\vert z\right\vert^{2k} u^2(\vert z\vert^{2k})\right)\vert dz\vert^2 = h_{k}(z) \vert dz \vert^2
\]
on $\mbb{C}^{*}$.
The Gaussian curvature of this metric $K_k$ satisfies $K_k(z) = K_{g}(z^k)$ for $z \in \mbb{C}^{*}$ and
\[
\lim_{z \to 0}{K_{k}(z)} = -4
\]
Also, the function $h_k$ satisfies $\lim_{z \to 0}{h_k(z)} = +\infty$ and 
\[
\frac{\partial h_k(z)}{\big(h_k(z)\big)^3}, \frac{\bar{\partial} h_k(z)}{\big(h_k(z)\big)^3}, \frac{\bar{\partial}\partial h_k(z)}{\big(h_k(z)\big)^3} \rightarrow 0
\]
as $z \to 0$. 
\end{lem}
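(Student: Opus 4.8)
The plan is to reduce the whole lemma to the one-variable Grauert metric through the $k$-th power map $\pi(z)=z^k$, and then to an elementary asymptotic analysis at the origin.

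First, I would observe that $\phi_k$ is exactly $\pi^{\ast}g$: since $\pi^{\ast}(\vert dw\vert^2)=k^2\vert z\vert^{2(k-1)}\vert dz\vert^2$ and $\vert\pi(z)\vert^2=\vert z\vert^{2k}$, one has $\pi^{\ast}g=k^2\vert z\vert^{2(k-1)}\big(1+\vert z\vert^{2k}u^2(\vert z\vert^{2k})\big)\vert dz\vert^2=h_k(z)\vert dz\vert^2$. As $\pi:\mbb{C}^{\ast}\to\mbb{C}^{\ast}$ is a holomorphic covering, hence a local biholomorphism, and Gaussian curvature is a local isometry invariant, this gives $K_k(z)=K_g(z^k)$ on $\mbb{C}^{\ast}$; and since $z\to 0$ forces $z^k\to 0$, the fact recalled from \cite{GV} that $K_g(w)\to -4$ as $\vert w\vert\to 0$ yields $\lim_{z\to 0}K_k(z)=-4$.

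For the statements about $h_k$ I would use that it is radial: write $h_k(z)=F(\vert z\vert^2)$ with $F(x)=k^2x^{k-1}\big(1+x^ku^2(x^k)\big)$ on $(0,\infty)$, so that $\partial h_k(z)=F'(\vert z\vert^2)\bar z$, $\bar\partial h_k(z)=F'(\vert z\vert^2)z$ and $\bar\partial\partial h_k(z)=\vert z\vert^2F''(\vert z\vert^2)+F'(\vert z\vert^2)$; in particular $\vert\partial h_k\vert=\vert\bar\partial h_k\vert$, so the first two required limits coincide. Thus the lemma reduces to three statements about $F$ as $x\to 0^{+}$: $F(x)\to+\infty$, $\vert F'(x)\vert\sqrt{x}/F(x)^3\to 0$ and $\vert xF''(x)+F'(x)\vert/F(x)^3\to 0$. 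These follow from the behaviour of $G(y):=1+yu^2(y)=1+(y-1)^2/(y\log^2 y)$ near $y=0$: a direct differentiation gives, as $y\to 0^{+}$,
\[
G(y)\ \ge\ \frac{1}{2y\log^2 y},\qquad G'(y)=O\!\left(\frac{1}{y^2\log^2 y}\right),\qquad G''(y)=O\!\left(\frac{1}{y^3\log^2 y}\right).
\]
Substituting $y=x^k$ and expanding $F(x)=k^2x^{k-1}G(x^k)$ by the Leibniz rule yields, term by term (so that no cancellation among the summands is needed),
\[
F(x)\ \ge\ \frac{c}{x\log^2 x},\qquad \vert F'(x)\vert=O\!\left(\frac{1}{x^2\log^2 x}\right),\qquad \vert xF''(x)+F'(x)\vert=O\!\left(\frac{1}{x^2\log^2 x}\right)
\]
for $x$ near $0$ and some $c>0$; the first bound, together with the trivial $F(x)\ge k^2x^{2k-1}u^2(x^k)$, already proves $\lim_{z\to 0}h_k(z)=+\infty$.

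Assembling these, $F(x)^{-3}=O(x^3\log^6 x)$, so $\vert F'(x)\vert\sqrt{x}/F(x)^3=O(x^{3/2}\log^4 x)$ and $\vert xF''(x)+F'(x)\vert/F(x)^3=O(x\log^4 x)$, both of which tend to $0$ as $x\to 0^{+}$ because $x^{a}\vert\log x\vert^{b}\to 0$ for every $a>0$. Substituting $x=\vert z\vert^2$ gives the three limits in the statement. The only real computation is the differentiation of $G(y)=1+(y-1)^2/(y\log^2 y)$ and the bookkeeping of powers of $x$ in passing from $G$ to $F$; everything else is formal. I should add the caution that the superficially similar ratios $\bar\partial\partial h_k/h_k^2$ and $\partial h_k\,\bar\partial h_k/h_k^3$ do \emph{not} vanish — each grows like $\log^2\vert z\vert$ — so it is precisely the extra power of $\vert z\vert$ in $\partial h_k$, and the extra power of $h_k$ in the denominator, that is being used.
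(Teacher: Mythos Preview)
Your argument is correct. The main difference from the paper's proof is in how you obtain $K_k(z)=K_g(z^k)$: the paper computes $\partial h_k$ and $\bar\partial\partial h_k$ explicitly, then shows by direct algebraic manipulation that $h_k\bar\partial\partial h_k-\partial h_k\,\bar\partial h_k=k^6\vert z\vert^{6k-6}M(\vert z\vert^{2k})$ with $M$ the function appearing in \cite{GV}, and recognizes the resulting quotient as $K_g(z^k)$. Your pullback observation $\phi_k=\pi^{\ast}g$ with $\pi(z)=z^k$ a local biholomorphism on $\mbb{C}^{\ast}$ bypasses that computation entirely and gives the identity for free; this is both shorter and more transparent. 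For the asymptotics at $0$ the two arguments are essentially the same elementary analysis of $u,u',u''$ near $0$, but you package it more cleanly through the radial function $F(x)=k^2x^{k-1}G(x^k)$, whereas the paper checks each of the terms $\vert z\vert^{2k-4}/h_k^3$, $\vert z\vert^{4k-4}u^2/h_k^3$, $\vert z\vert^{6k-4}uu'/h_k^3$, $\vert z\vert^{8k-4}\big((u')^2+uu''\big)/h_k^3$ separately. The paper's explicit factoring has the minor advantage of displaying $M$ directly (which connects to \cite{GV}), while your route avoids any case distinction on $k$ and makes the final estimates $O(x^{3/2}\log^4 x)$ and $O(x\log^4 x)$ explicit.
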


\begin{proof}
Since $\phi_k(z) = h_k(z) \vert dz\vert^2$, the curvature is given by
\[
K_k(z) = -2 \left(\frac{h_k(z) \bar{\partial}\partial h_k(z) - \partial h_k(z) \bar{\partial} h_k(z)}{\big(h_k(z)\big)^3}\right)
\]
for $z \in \mbb{C}^{*}$. We will write just $u$ instead of $u(\vert z\vert^{2k})$. Observe that:
\[
\partial h_k(z) = \overline{\bar{\partial} h_k(z)} = k^2 \bar{z}\left((k-1)\left\vert z\right\vert^{2k-4} + (2k-1) \left\vert z\right\vert^{4k-4} u^2 + 2k \left\vert z\right\vert^{6k-4} u u'\right)
\]
\[
\bar{\partial}\partial h_k(z) = \vert z\vert^{2k-4} \big( k^2 (k-1)^2  + k^2 (2k-1)^2 \vert z\vert^{2k} u^2 + 2k^3(5k-2)\vert z\vert^{4k} u u' + 2k^4 \vert z\vert^{6k} \big((u')^2 + uu''\big)\big)
\]
Upon simplification, we get
\[
h_k(z) \bar{\partial}\partial h_k(z) - \partial h_k(z) \bar{\partial} h_k(z) = k^6 \left\vert z\right\vert^{6k-6} M(\vert z\vert^{2k})
\]
where $M : (0, +\infty) \rightarrow \mbb{R}$  is given by
\[
M(t) = u^2 + 6t uu' + 2t^2(u')^2 +2t^2 uu'' + 2t^2 u^3 u' - 2t^3 u^2 (u')^2 + 2t^3 u^3 u''
\]
Now \cite{GV} tells us that 
\[
K_g(z) = -2\frac{M(\vert z\vert^2)}{\left(1 + \left\vert z\right\vert^2 u^2(\vert z\vert^2)\right)^3}
\]
for $z \in \mbb{C}^{*}$. Now note that
\[ 
K_k(z) = -2 \frac{k^6 \left\vert z\right\vert^{6k-6} M(\vert z\vert^{2k})}{k^6 \left\vert z\right\vert^{6k-6}\left(1 + \left\vert z\right\vert^{2k} u^2 (\vert z\vert^{2k})\right)^3} = -2\frac{M(\vert z^k\vert^2)}{\left(1 + \left\vert z^k\right\vert^2 u^2 (\vert z^k\vert^2)\right)^3} = K_g(z^k)
\]
This shows that for $k \in \mbb{N}$,  $z \in \mbb{C}^*$, $K_k(z) = K_g(z^k)$ and 
\[
\lim_{z \to 0}{K_k(z)} = \lim_{z \to 0}{K_g(z^k)} = -4
\]

\medskip 

To focus on the function $h_k(z)$, observe that
\[
\lim_{z \to 0} h_k(z) = \lim_{t \to 0^{+}}{k^2 t^{k-1} \left(1 + t^k u^2(t^k)\right)} = \lim_{t \to 0^{+}} \frac{k^2 t^{2k-1} (t^k -1)^2}{t^{2k} (\log{t^k})^2} = \lim_{t \to 0^{+}} \frac{k^2 (t^k -1)^2}{t(\log{t^k})^2} = +\infty
\]
Now substituting for $u, u', u''$, simplifying the resulting expression and using the fact that $\lim_{t \to 0^{+}} t (\log{t}) = 0$, we get
\[
\frac{\left\vert z\right\vert^{2k-4}}{\left(h_k(z)\right)^3}, \frac{\left\vert z\right\vert^{4k-4} u^2}{\left(h_k(z)\right)^3}, \frac{\left\vert z\right\vert^{6k-4} u u'}{\left(h_k(z)\right)^3}, \frac{\left\vert z\right\vert^{8k-4} \left((u')^2 + uu''\right)}{\left(h_k(z)\right)^3} \to 0
\]
as $z \to 0$. It is now evident that
\[
\lim_{z \to 0} \frac{\partial h_k(z)}{\left(h_k(z)\right)^3} = \lim_{z \to 0} \frac{\bar{\partial} h_k(z)}{\left(h_k(z)\right)^3} = \lim_{z \to 0} \frac{\bar{\partial} \partial h_k(z)}{\left(h_k(z)\right)^3} = 0
\]

\end{proof}

Now we can prove Theorem 5.1. 

\begin{proof} For $z \in \mbb{C} \sm \{f = 0\}$, the curvature is given by
\[
K(z) = -2 \left(\frac{h(z)\partial \bar{\partial}h(z) - \partial h(z) \bar{\partial}h(z)}{\big(h(z)\big)^3}\right).
\]

\no For $(i)$, suppose that $f'(p) \neq 0$. Let $\hat{g}$ be a local inverse of $f$ near $p$, that is $\hat{g} \circ f(z) = z$ for $z$ in a neighourhood $U_p$ around $p$. It can be checked that $\hat{g}'(w) = \big(f'(\hat{g}(w))\big)^{-1}$ and $\hat{g}''(w) = -f''(\hat{g}(w))\big(f'(\hat{g}(w))\big)^{-2}$. In these new coordinates, we have
\[
\hat{g}^{*}(\phi)(w) = \left(1 + \left\vert \hat{g}'(w) \right\vert^{-2} \left(1 + \left\vert w \right\vert^2 u^2(\vert w \vert^2)\right)\right)\vert \hat{g}'(w) \vert^2 \vert dw \vert^2  
\]
\[
\hat{g}^{*}(\phi)(w) = \left(\left\vert \hat{g}'(w) \right\vert^2 + \left(1 + \left\vert w \right\vert^2 u^2(\vert w \vert^2)\right)\right)\vert dw \vert^2 = \phi_1(w) + \phi_2(w)
\]
where $\phi_2(w)$ is the Grauert metric on $\mbb{C}^{*}$ and $\phi_1(w) = \left\vert \hat{g}'(w) \right\vert^2 \vert dw \vert^2$. By \cite{GV}, $K_g(w) \le 0$ for all $w \in \mbb{C}^{*}$ and note that $K_1$, the curvature of the metric $\phi_1$ satisfies $K_1 \equiv 0$. Now \cite{Gr2} tells us that the curvature of sum of these two metrics is also non-positive in $U_p$, that is $K(p) \le 0$.

\medskip
\noindent To continue, note that if $f'(p) = 0$, then $h(p) =1$. Also
\[
\partial h(z) = \left(\left\vert f'(z) \right\vert^2 \partial \big(1 + \left\vert f \right\vert^2 u^2\big) + f''(z) \overline{f'(z)}\big(1 + \left\vert f \right\vert^2 u^2\big)\right)
\]
\[
\bar{\partial}\partial h = \left( \bar{\partial}\Big(\left\vert f'\right\vert^2 \partial (1+ \left\vert f\right\vert^2 u^2)\Big) + f'' \overline{f'}\bar{\partial} \big(1 + \left\vert f\right\vert^2 u^2\big) + \left\vert f''\right\vert^2 \big(1 + \left\vert f\right\vert^2 u^2\big)\right)
\]
which gives us $\partial h(p) = \bar{\partial} h(p) = 0$ and $\bar{\partial}\partial h(p) = \left\vert f''(p) \right\vert^2 \Big(1 + \left\vert f(p)\right\vert^2 u^2(\vert f(p)\vert^2)\Big)$. Therefore we get
\[
K(p) = -2\left\vert f''(p) \right\vert^2 \Big(1+ \left\vert f(p) \right\vert^2 u^2(\vert f(p)\vert^2)\Big)
\]
and hence $K(p) = 0$ if and only if $f''(p) = 0$.

\medskip

\no For $(ii)$, let $p \in \mbb{C}$ be such that $f(p) =  0$. By a translation, assume that $p = 0$. Let $k = {\rm Ord}_{0}(f)$. Then there exists a holomorphic map $f_1$ such that $f_1(0) \neq 0$ and $f(z) = z^k f_1(z)$. Now since $f_1(0) \neq 0$, there exist a holomorphic map $f_2$ defined near $0 \in \mbb{C}$ such that $f_2(0) \neq 0$ and $f_1(z) = \big(f_2(z)\big)^k$. So for $z $ in a neighourhood of $0 \in \mbb{C}$, we get
\[
f(z) = \big(z f_2(z)\big)^k
\]
Define $W(z) = z f_2(z)$ for $z$ near origin and observe that $W'(0) = f_2(0) \neq 0$, that is $W$ is a biholomorphism on a neighourhood around $0 \in \mbb{C}$. Therefore there exist a holomorphic function $g_1$ defined near $0 \in \mbb{C}$ such that $g_1(0) = 0$ and $W^{-1}(w) = g_1(w)$. Since $f'(z) = k\big(W(z)\big)^{k-1}W'(z)$, it follows that $f'\big(W^{-1}(w)\big) = k w^{k-1} \left((W^{-1})' (w)\right)^{-1}$. Now
\begin{align*}
\left((W^{-1})^{*} \phi\right)(w) &= \left(1 + k^2 \left\vert w\right\vert^{2(k-1)} \left\vert g_{1}'(w)\right\vert^{-2} \big(1 + \left\vert w\right\vert^{2k} u^2(\vert w\vert^{2k})\big)\right) \left\vert g_{1}'(w)\right\vert^2 \vert dw\vert^2\\
                                     &= \left( \left\vert g_{1}'(w)\right\vert^2 + k^2 \left\vert w\right\vert^{2(k-1)} \big(1 + \left\vert w\right\vert^{2k} u^2(\vert w\vert^{2k})\big) \right) \vert dw\vert^2. 
\end{align*}

\medskip
Therefore, $\left((W^{-1})^{*} \phi\right)(w) = \left\vert g_{1}'(w)\right\vert^2 \vert dw\vert^2 + h_{k}(w) \vert dw\vert^2$. Observe that $\left\vert g_{1}'(w)\right\vert^2 \vert dw\vert^2$ has identically vanishing Gaussian curvature. Now using Lemma 5.2 and the fact that $g_{1}'(w), g_{1}''(w)$ are bounded in a neighourhood of $w = 0 \in \mbb{C}$, we get
\[
\lim_{z \to 0} K(z) = \lim_{w \to 0} -2 \left(\frac{h_{k}(w) \bar{\partial} \partial h_{k}(w) - \partial h_{k}(w) \bar{\partial} h_{k}(w)}{\big(h_{k}(w) + \vert g_{1}'(w)\vert^2\big)^3}\right) = \lim_{w \to 0} K_{k}(w) = -4
\]
This completes the proof.
\end{proof}

%%%%%%%%%%%%%%%%%%%%%%

\end{document}